\documentclass{article}
\usepackage{amsmath,amssymb,amsthm,stmaryrd,makeidx}
\usepackage[all]{xy}

\DeclareMathOperator{\der}{Der}
\DeclareMathOperator{\enm}{End}

\DeclareMathOperator{\id}{id}

\DeclareMathOperator{\im}{im}

\DeclareMathOperator{\ann}{Ann}
\DeclareMathOperator{\hmm}{Hom}
\DeclareMathOperator{\mor}{Mor}

\DeclareMathOperator{\ph}{Ph}

\DeclareMathOperator{\pts}{Pts}

\newcommand{\cat}[1]{\mathbf{#1}}

\newcommand{\sets}{\mathbf{Sets}}

\newcommand{\grmcat}[1]

\newcommand{\gl}{\mathsf{GL}}

\input xypic

\newtheorem{proposition}{Proposition}
\newtheorem{theorem}{Theorem}
\newtheorem{lemma}{Lemma}
\newtheorem{corollary}{Corollary}
\newtheorem{definition}{Definition}

\newtheorem{example}{Example}

\makeindex

\begin{document}
\author{Arvid Siqveland}
\title{Riemannian Geometry on Associative Varieties}

\maketitle

\begin{abstract} We prove that the classical algebraic varieties over algebraically closed fields can  be defined over arbitrary fields $k.$ Then we prove that for associative algebras $A$, there exist local representing objects $A_M$ for simple modules $M.$ Replacing the localization in maximal ideals in the commutative situation with the local representations in simple modules in the associative, we define an associative generalization of varieties. Now we realize that replacing $\mathbb R[x_1,\dots,x_n]=\mathbb R[n]$ with $C^\infty(\mathbb R^n),$ we can do differential geometry for associative $\mathbb R[n]$-algebras. This says that we can define a Riemannian geometry on associative varieties. This gives us the definition of connections and algebraic geodesic curves, introducing real geometry into associative algebras.
\end{abstract}

\section{Prologue}
We start with a new interpretation of our observable universe. The Cartesian view considers the centre of the earth in the origin in $\mathbb R^3,$ such that a point is identified with a triple of coordinates. Take a new view of this universe; identify a point in space as a triple of coordinates relative to another point, another triple of coordinates. That is, the universe consists of pairs $(o,p)$ of points called observer and observed where $o,p\in\mathbb R^3,o\neq p.$ Everything is relative: Our model of the universe is invariant under the linear geometric group, that is $U=(\mathbb R^6\setminus\Delta)/\gl(3).$ Assume that there is a yet not understood function $l:U\times U\rightarrow U$ and assume there is a function $v:U\rightarrow\mathbb R$ such that $v(l(P,Q))$ is interpreted as the maximal speed from $P$ to $Q.$ With this view of the universe, time can be defined as a Riemannian metric on $U\times\{l\},$ and we should define $l$ (ad hoc) by claiming all the physical laws to be true.

\section{Classical Algebraic Geometry}
As a reference, we use Harthorne's book \cite{HH77}.
Let $\Bbbk$ be an algebraically closed field, let $f\in\Bbbk[x_1,\dots,x_n]=:\Bbbk[n].$ We put $Z(f)=\{p\in\Bbbk^n|f(p)=0\}$ and let this be a basic closed set for a topology on $\Bbbk^n$. That is $$D(f)=\{p\in\Bbbk^n|f(p)\neq 0\}$$ is a basic open set for the Zariski topology. Hilbert's nullstellensatz says that when $A$ is a finitely generated $k$-algbra, $k$ any field, and $\mathfrak m\subset A$ a maximal ideal, then, in the diagram $$\xymatrix{k\ar[r]\ar[dr]&A\ar[d]^p\\&A/\mathfrak m}$$ $A/\mathfrak m$ is an algebraic extension of $k.$ Thus, when $\Bbbk$ is algebraically closed, the set of maximal ideals in $\Bbbk[n]$ is in one-to-one correspondence with the set $\Bbbk^n,$ where  the maximal ideal $\mathfrak m$ corresponds to $(p(x_1),\dots,p(x_n)),$ and $(p_1,\dots,p_n)$ to the maximal ideal $(x_1-p_1,\dots,x_n-p_n).$

\begin{definition} A topological space $X$ is irreducible, if whenever $X=Z_1\cup Z_2$
with $Z_i\subseteq X$ closed subsets, $i=1,2,$ then $Z_1=X$ or $Z_2=X.$
\end{definition} 

When $X$ is Hausdorff, $X$ is reducible. We can conclude that irreducibility is an algebraic concept. This is more visible when we recall that the algebraic set $$Z(\mathfrak a)=\{p\in\Bbbk^n|f(p)=0\ \forall f\in\mathfrak a\},$$ where $\mathfrak a\subseteq\Bbbk[n]$ is an ideal in the noetherian ring $\Bbbk[n],$ is irreducible if and only if $\mathfrak a$ is prime.

\begin{definition} An Affine Algebraic Variety is an irreducible algebraic subset  $X\subseteq\Bbbk^n.$ Any open subset of an affine algebraic variety is called a quasi-affine algebraic variety.
\end{definition}

An affine (algebraic) variety is then $Z(\mathfrak p)\subseteq\Bbbk^n$ for a prime ideal $\mathfrak p\subseteq\Bbbk^n,$ and as every open subset of an irreducible set is dense and so irreducible, a quasi-affine (algebraic) variety is an open subset $U\subseteq Z(\mathfrak p).$ 

Let $X=Z(\mathfrak a)\subseteq\Bbbk^n$ be an algebraic variety, that is, $\mathfrak a$ is prime, and $A/\mathfrak a$ is a domain. We define the sheaf of regular functions on $X.$ For each open $U\subseteq X,$ put $O_X(U)=\prod_{p\in U}(A/\mathfrak a)_{\mathfrak m_p}.$ We have the natural morphism $$\rho(U):A/\mathfrak a\rightarrow O_X(U)$$ and we define the subset $S(U)\subseteq O_X(U)$ by  $S(U)=\{s^{-1}|s=\rho(U)(a)\text{ unit}\}.$ Then we let $\mathcal O_X(U)$ be the subalgebra of $O_X(U)$ generated by $\im\rho(U)$ and $S(U).$  It follows that $\mathcal O_X$ is a sheaf by the universal property of products. From the diagram $$\xymatrix{\Bbbk[s,s^{-1}]\ar[dr]\ar[d]&\\\mathcal O_X(U)\ar[r]&A_{\mathfrak m_p}}$$ we see that we can lift to any open cover of basic opens.

We also notice that because $A/\mathfrak a$ is a domain, $\rho(U)$ is injective, so that $\mathcal O_X(D(f))=(A/\mathfrak a)_f,$ and in particular $\mathcal O(X)=A/\mathfrak a.$ 

\begin{definition} Let $\mathfrak a\subseteq\Bbbk[n]$ be any ideal, and let $X=Z(\mathfrak a)$ be the corresponding algebraic set. Then define the sheaf $\mathcal O_X$ of regular functions on $X$ as above. 
\end{definition}

\begin{lemma} For each open $U\subseteq Z(\mathfrak a)$ the homomorphism $$\rho=\rho(U):A/\mathfrak a\rightarrow\prod_{p\in U}(A/\mathfrak a)_{\mathfrak m_p}$$ is injective.
\end{lemma}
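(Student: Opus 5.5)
The plan is to reduce injectivity to a statement about annihilators. Write $B=A/\mathfrak a$ with $A=\Bbbk[n]$. Suppose $b\in B$ maps to $0$ under $\rho(U)$. Then for every $p\in U$ the image of $b$ in $B_{\mathfrak m_p}$ vanishes. So there is $s_p\notin\mathfrak m_p$ with $s_pb=0$, i.e. $\ann(b)\not\subseteq\mathfrak m_p$ for all $p\in U$. Injectivity is therefore equivalent to: the only $b$ with $\ann(b)\not\subseteq\mathfrak m_p$ for every $p\in U$ is $b=0$.

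\emph{Case $U=X=Z(\mathfrak a)$.} By the Nullstellensatz correspondence recalled above, the points $p\in X$ are exactly the maximal ideals of $B$, namely the $\mathfrak m_p/\mathfrak a$. If $b\neq 0$, then $\ann(b)$ is a proper ideal, since $1\notin\ann(b)$. It is therefore contained in some maximal ideal of $B$, which is $\mathfrak m_p$ for some $p\in X$. This contradicts the vanishing of $b$ at that stalk, so $\rho(X)$ is injective.

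\emph{Case $\mathfrak a$ prime.} This is the situation the paper uses for varieties. Here $B$ is a domain, so each localization map $B\to B_{\mathfrak m_p}$ is injective. Since $U\neq\emptyset$, it suffices to pick any single $p\in U$, and injectivity follows at once.

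\emph{General $U$ and general $\mathfrak a$.} This is the main obstacle, and I expect the statement as written to need a hypothesis. The natural tool is primary decomposition in the noetherian ring $B$. The vanishing locus of $\ann(b)$ is contained in the union of the closures of the associated primes of $b$. An element $b$ dies on $U$ exactly when every associated prime of $Bb$ has zero set disjoint from $U$.

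I would first test this on $\mathfrak a=(x^2,xy)\subset\Bbbk[x,y]$. Here $X$ is the line $x=0$, and the origin is an embedded point. Take $U=X\setminus\{0\}$ and $b=x$. Then $b\neq 0$ in $B$, but at $(0,c)$ with $c\neq 0$ the element $y$ is a unit, so $x=xy/y=0$ in the stalk.

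So for general $\mathfrak a$ and $U$ the map can fail to be injective, and the plan is to prove the lemma in the following form: $\rho(U)$ is injective whenever every associated prime of $\mathfrak a$ has a zero set meeting $U$. This holds in particular when $U=X$, or when $\mathfrak a$ is prime and $U\neq\emptyset$. The proof in that form again goes through annihilators. If $b\neq 0$, choose an associated prime $\mathfrak q=\ann(c)$ with $c\in Bb$. Pick $p\in U\cap Z(\mathfrak q)$. Then $\ann(b)\subseteq\ann(c)=\mathfrak q\subseteq\mathfrak m_p$, so $b$ survives in $B_{\mathfrak m_p}$.
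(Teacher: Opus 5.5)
Your proposal is correct, and it is more careful than the paper's own proof. The paper's proof is exactly your annihilator argument for the case $U=X$. From $\rho(a)=0$ it gets $\ann(a)\not\subseteq\mathfrak m_p$ for all $p\in U$, and then concludes $\ann(a)=A/\mathfrak a$. That conclusion requires $\ann(a)$ to avoid \emph{every} maximal ideal of $A/\mathfrak a$. In other words, the paper silently replaces ``for all $p\in U$'' by ``for all $p\in Z(\mathfrak a)$'', which is legitimate only when $U=X$.

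Your example $\mathfrak a=(x^2,xy)$, $U=X\setminus\{0\}$, $b=x$ checks out. First, $x\notin\mathfrak a$, since every element of $\mathfrak a$ has all monomials of degree at least $2$. Second, $y$ is a unit at each $(0,c)$ with $c\neq 0$, so $x=(xy)y^{-1}=0$ there. So the lemma is false as stated, and no proof for arbitrary open $U$ and arbitrary $\mathfrak a$ can exist. It also fails trivially for $U=\emptyset$ whenever $\mathfrak a\neq A$, since the empty product is the zero ring.

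Your repaired statement is the right one, and your hypothesis is in fact also necessary. Suppose some associated prime $\mathfrak q=\ann(c)$ of $A/\mathfrak a$, with $c\neq 0$, has $Z(\mathfrak q)\cap U=\emptyset$. Then $\mathfrak q\not\subseteq\mathfrak m_p$ for every $p\in U$, so $\rho(U)(c)=0$. Thus $\rho(U)$ is injective if and only if every associated prime of $\mathfrak a$ has zero set meeting $U$. The only later use of the lemma in the paper is the Corollary $\mathcal O_X(X)=A/\mathfrak a$, which needs only $U=X$ and is covered by your version, so nothing downstream breaks.
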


\begin{proof} $\rho(a)=0\Rightarrow\forall p\in U\ \exists h\notin\mathfrak m_p$ s.t. $ha=0\Rightarrow\forall\mathfrak m_p,\ann(a)\nsubseteq\mathfrak m_p\Rightarrow\ann(a)=A/\mathfrak a\Rightarrow 1\in\ann(a)\Rightarrow 1\cdot a=a=0.$
\end{proof}

\begin{corollary} For any $\mathfrak a\subseteq\Bbbk[n],\ X=Z(\mathfrak a),\ \mathcal O_X(X)=A/\mathfrak a.$
\end{corollary}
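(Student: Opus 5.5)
We show that the two inclusions $\im\rho(X)\subseteq\mathcal O_X(X)$ and $\mathcal O_X(X)\subseteq\im\rho(X)$ both hold, and then identify $\im\rho(X)$ with $A/\mathfrak a$ using the Lemma. By definition, $\mathcal O_X(X)$ is the subalgebra of $O_X(X)=\prod_{p\in X}(A/\mathfrak a)_{\mathfrak m_p}$ generated by $\im\rho(X)$ and by $S(X)$, the set of inverses of those elements $\rho(X)(a)$ which are units in $O_X(X)$. So it suffices to show three things: $S(X)\subseteq\im\rho(X)$; $\im\rho(X)$ is a subalgebra, hence equal to $\mathcal O_X(X)$; and $\rho(X)$ is injective. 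The first is the real content, the second is immediate since $\rho(X)$ is a ring homomorphism, and the third is exactly the Lemma with $U=X$.

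For $S(X)\subseteq\im\rho(X)$, take $a\in A/\mathfrak a$ with $\rho(X)(a)$ a unit in the product. Then for every $p\in X$ the image of $a$ is a unit in $(A/\mathfrak a)_{\mathfrak m_p}$, which means $a\notin\mathfrak m_p(A/\mathfrak a)$. By the Nullstellensatz as recalled above, with $\Bbbk$ algebraically closed, the maximal ideals of $A/\mathfrak a$ are exactly the $\mathfrak m_p/\mathfrak a$ for $p\in Z(\mathfrak a)=X$. Hence $a$ lies in no maximal ideal of $A/\mathfrak a$, so $a$ is a unit in $A/\mathfrak a$ with inverse $b$. Then $\rho(X)(b)=\rho(X)(a)^{-1}$, so $s^{-1}\in\im\rho(X)$. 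Consequently the algebra generated by $\im\rho(X)$ and $S(X)$ is just $\im\rho(X)$.

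Finally, by the Lemma applied to $U=X$, $\rho(X)$ is injective, so $\mathcal O_X(X)=\im\rho(X)\cong A/\mathfrak a$.

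The main obstacle is the identification of the maximal ideals of $A/\mathfrak a$ with the points of $Z(\mathfrak a)$. One must check that every maximal ideal of $\Bbbk[n]$ containing $\mathfrak a$ has the form $\mathfrak m_p$ for some $p\in Z(\mathfrak a)$. This follows from the bijection between maximal ideals and points together with the observation that $\mathfrak a\subseteq\mathfrak m_p$ if and only if $f(p)=0$ for all $f\in\mathfrak a$. Once this is in place, being a unit at every point of $X$ forces being a unit globally.
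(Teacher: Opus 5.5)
Your proof is correct and is essentially the paper's argument. The paper reads the corollary off the injectivity Lemma at $U=X$, and you additionally supply the step it leaves implicit: $S(X)\subseteq\im\rho(X)$, because by the Nullstellensatz an element of $A/\mathfrak a$ that is a unit in every $(A/\mathfrak a)_{\mathfrak m_p}$, $p\in Z(\mathfrak a)$, lies in no maximal ideal and is already a unit. You are also right to use the Lemma only for $U=X$: its argument needs every maximal ideal of $A/\mathfrak a$ to be some $\mathfrak m_p$ with $p\in U$, and for non-prime $\mathfrak a$ and proper $U$ injectivity can fail (e.g.\ $\mathfrak a=(xy)$, $U=X\cap D(x)$, where $\rho(U)(\bar y)=0$).
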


A morphism $f:X=Z(\mathfrak p)\rightarrow Z(\mathfrak q)=Y$ between affine algebraic varieties, is a $\Bbbk$-algebra homomorphism $\phi:A(Y)=\Bbbk[n]/\mathfrak q\rightarrow\Bbbk[n]/\mathfrak p=A(X).$

Because of the diagram $$\xymatrix{\Bbbk\ar[r]\ar[drr]_\id&A(Y)\ar[r]^\phi&A(X)\ar[d]\\&&\Bbbk}$$ it follows that maximal ideals are sent to maximal ideals. Note that classically, one says that a morphism $\phi:(X,\mathcal O_X)\rightarrow (Y,\mathcal O_Y)$ consists of a continuous map $\phi:X\rightarrow Y$ such that for every $f\in\mathcal O_Y(U)$ we have that $f\circ\phi:\phi^{-1}(U)\rightarrow\Bbbk$ is in $\mathcal O_X(\phi^{-1}(U)),$ but that is equivalent to the above definition.

Finally, the definition of an algebraic variety over $\Bbbk$ can be made global. We define a ringed topological space as a pair $(X,\mathcal O_X)$ with $X$ a topological space and $\mathcal O_X$ a sheaf of rings $\mathcal O_X:\cat{Top}(X)\rightarrow\cat{Rings}.$ A morphism of ringed spaces is a pair $(f,f^\#):(X,\mathcal O_X)\rightarrow (Y,\mathcal O_Y)$ where $f:X\rightarrow Y$ is continuous and $f^\#:f_{\ast}\mathcal O_Y\rightarrow\mathcal O_X$ is a morphism of functors.

\begin{definition} An abstract variety over $\Bbbk$ is a ringed topological space $(X,\mathcal O_X)$ such that $X$ has an open covering $X=\cup_{i\in I}U_i$ with each $(U_i,\mathcal O_X|_{U_i})\simeq (V,\mathcal O_V)$ an affine variety.
\end{definition}

This last definition is the frame for projective spaces and moduli varieties in general.

\section{Categorical Algebraic Geometry}
Let $k$ be any field and $A$ a finitely generated $k$-algebra. Then the set of $k$-algebra homomorphisms $\hmm_k(A,k)$ is in bijective correspondence with the set of maximal ideals $\mathfrak m$ in $A$ which are such that $A/\mathfrak m\simeq k.$ This is  because every homomorphism $p:A\rightarrow k$ commutes in the diagram $$\xymatrix{k\ar@{^{(}->}[r]\ar[dr]_\id&A\ar@{->>}[d]^p\\&k}$$ so that the correspondence is given by sending $p$ to $\mathfrak m_p=\ker(p).$ We define the space of $k$-points in $A$ as $\pts_k A=\hmm_k(A,k),$ and for $f\in A$ we denote $p(f)=f(p).$ Then for $f\in A$ we let the basic open sets be given by $$D(f)=\{p\in\pts_k  A|f(p)\neq 0\}\subseteq\pts_k(A),$$ thereby defining a topology on $\pts_k A.$ 

Given a $k$-algebra homomorphism $\phi:B\rightarrow A.$ Then for any $p\in\pts_k A,\ \phi$ fits in the diagram $$\xymatrix{k \ar@/_/[drr]_\id\ar@{^{(}->}[r]&B\ar[r]^\phi\ar[dr]&A\ar[d]^p\\&&k}$$ giving a map $\phi:\pts_k A\rightarrow\pts_k B$ such that for all $f\in B,$ 
$$\phi^{-1}(D(f))=\{p\in\pts_k A|f(\phi(p))\neq 0\}=D(\phi(f)),$$ that is, $\phi$ is continuous.

We define a sheaf of $k$-algebras on $X_k=\pts_k A.$ For each open $U\subseteq X_k$ we consider the natural morphism $\rho(U):A\rightarrow\prod_{p\in U}A_{\mathfrak m_p},$ and let $$S(U)=\{s^{-1}\in\prod_{p\in U}A_{\mathfrak m_p}|s=\rho(U)(a)\text{ is a unit}\}.$$ We put $\mathcal O_{X_k}(U)=\im\rho(U)\langle S(U)\rangle,$ that is the subalgebra generated by $\im\rho(U)$ and $S(U).$ Then $\mathcal O_{X_k}$ is a sheaf of $k$-algebras. Also, given a homomorphism $\phi:B\rightarrow A,$ we have the diagram 
$$\xymatrix{B\ar[r]^\phi\ar[d]&A\ar[d]\\\prod_{p\in Y_k}B_{\mathfrak m_{\phi(p)}}\ar[r]&\prod_{p\in X_k}A_{\mathfrak m_p},}$$ inducing a morphism $\phi^\ast:X_k\rightarrow Y_k$ with $Y_k=\pts_k B.$

Let $\max (A)$ be the set of maximal ideals in the $k$-algebra $A,$ let $\rho:A\rightarrow A_k:=\Gamma(\mathcal O_X):=\mathcal O_{\pts_k A}(\pts_k A)$ be the natural homomorphism.

\begin{lemma}
\begin{itemize}
\item[(i)] $\max(A_k)=\pts_k A_k.$
\item[(ii)] $(A_k)_{m_p}\simeq A_{\mathfrak m_{\rho(p)}}$
\item[(iii)] $\Gamma(\mathcal O_{X_k})=\mathcal O_{X_k}(X_k)\simeq A_k.$
\end{itemize}
\end{lemma}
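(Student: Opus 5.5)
The plan is to first describe $A_k=\mathcal O_{X_k}(X_k)$ concretely as a localization of $A$, and then read off (i)--(iii). Put $X_k=\pts_k A$. By construction $A_k$ is generated by $\im\rho(X_k)$ and the inverses of those $\rho(a)$ that are units in $\prod_{p\in X_k}A_{\mathfrak m_p}$. Now $\rho(a)$ is a unit exactly when $a\notin\mathfrak m_p$ for every $p\in X_k$, that is, when $a(p)\neq 0$ for all $k$-points. Write $T=\{a\in A\mid a(p)\neq 0\ \forall p\in X_k\}$, a multiplicative set. The universal property of localization gives a homomorphism $T^{-1}A\rightarrow\prod_{p}A_{\mathfrak m_p}$ whose image is $A_k$. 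So $A_k$ is a quotient of $T^{-1}A$, with kernel the elements killed in every $A_{\mathfrak m_p}$; when $A$ is a domain, or more generally by the argument of the injectivity Lemma above, this kernel is controlled. The first step is therefore the identification $A_k\simeq T^{-1}A/N$, where $N=\bigcap_{p}\ker(T^{-1}A\rightarrow A_{\mathfrak m_p})$.

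For (ii), fix $p\in X_k$ with maximal ideal $\mathfrak m_p\subset A$. Since $T\cap\mathfrak m_p=\emptyset$, we get $(T^{-1}A)_{T^{-1}\mathfrak m_p}\simeq A_{\mathfrak m_p}$ by transitivity of localization. Moreover $N$ localizes to zero at $\mathfrak m_p$ by its definition. Hence $(A_k)_{\mathfrak m}\simeq A_{\mathfrak m_p}$, where $\mathfrak m$ is the extended ideal. This extended ideal is the kernel of the induced point $A_k\rightarrow k$, obtained by composing $A_k\rightarrow A_{\mathfrak m_p}\rightarrow k$; I take this to be the meaning of $\mathfrak m_{\rho(p)}$ in the statement.

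For (i), the inclusion $\pts_k A_k\subseteq\max(A_k)$ is immediate, since kernels of surjections onto $k$ are maximal. The reverse inclusion is the main obstacle. Given a maximal ideal $\mathfrak n\subset A_k$, I want to show it comes from a $k$-point. I would argue by contradiction. If $\mathfrak n\cap A$ were contained in no $\mathfrak m_p$ with $p\in X_k$, then by a prime-avoidance or Nullstellensatz-type argument I want to produce $a\in\mathfrak n\cap A$ with $a(p)\neq0$ for all $p$. Such an $a$ lies in $T$, so it is invertible in $A_k$, which is a contradiction. The delicate point is producing a single such $a$ from the ideal. For finitely generated $A$ I would use the Nullstellensatz remark from the previous section, that $A/\mathfrak m$ is a finite extension of $k$. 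I would try to reduce to the case $A/(\mathfrak n\cap A)$ finite over $k$, and then use a norm-type element to obtain $a$. This step may require extra hypotheses, and I expect it to be where the argument is weakest. Once $\mathfrak n\cap A\subseteq\mathfrak m_p$, maximality of $\mathfrak n$ forces $\mathfrak n=\mathfrak m_pA_k$, which is a $k$-point.

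For (iii), the equality $\Gamma(\mathcal O_{X_k})=\mathcal O_{X_k}(X_k)$ is the definition, so the content is that applying the construction to $A_k$ returns $A_k$. By (i) and (ii), $\pts_k A_k$ is identified with $X_k$, and the local rings coincide. The product $\prod_pA_{\mathfrak m_p}$ is therefore the same for $A$ and for $A_k$. Every element of $A_k$ that is a unit at all points is already invertible in $A_k$, because inverting $T$ a second time does nothing. Hence the subalgebra generated by $\im\rho$ and $S$ is $A_k$ itself. Injectivity of $A_k\rightarrow\prod_pA_{\mathfrak m_p}$ holds by construction, since $A_k$ was defined as an image. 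This yields $\Gamma(\mathcal O_{X_k})\simeq A_k$.
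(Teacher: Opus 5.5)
Your arguments for (ii) and (iii) are fine. Your plan for (i) is also the paper's: a maximal ideal of $A_k$ that is not a $k$-point would contain an element nonvanishing at every $k$-point, hence a unit. The gap is that the step carrying all the content of (i) is left open. Put $\mathfrak a=\mathfrak n\cap A$, a proper ideal of $A$ contained in no $\mathfrak m_p$. You must produce a \emph{single} $a\in\mathfrak a\cap T$.

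Prime avoidance does not give this, because there are infinitely many $\mathfrak m_p$. Your proposed reduction to $A/\mathfrak a$ finite over $k$ is not available either. You only know $\mathfrak a$ is a prime disjoint from $T$, not a maximal ideal of $A$, and even in the maximal case the ``norm-type element'' is never specified. The paper's proof simply asserts that such an $f$ exists, so you have located the crux correctly. As written, however, your proposal does not prove (i).

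The missing idea is to combine finitely many generators using an anisotropic form. Since $A$ is finitely generated, it is Noetherian, so $\mathfrak a=(f_1,\dots,f_r)$. The hypothesis then says the $f_i$ have no common $k$-rational zero.

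\emph{Case $k$ algebraically closed.} The Nullstellensatz places the proper ideal $\mathfrak a$ inside some $\mathfrak m_p$ directly, which is the desired conclusion.

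\emph{Case $k$ not algebraically closed.} Choose $g\in k[t]$ of degree $d\ge 2$ with no root in $k$, and set $G(x,y)=y^dg(x/y)$. Then $G$ is homogeneous of positive degree, and $G(a,b)=0$ with $a,b\in k$ forces $a=b=0$. Define
$h=G(f_1,G(f_2,\dots,G(f_{r-1},f_r)\dots))$.
Homogeneity of $G$ gives $h\in\mathfrak a$. Induction gives $h(p)=0$ if and only if $f_1(p)=\dots=f_r(p)=0$. Hence $h\in\mathfrak a\cap T$, and its image is a unit of $A_k$ lying in $\mathfrak n$. This is the contradiction you wanted. Over $\mathbb R$, the case used later in the paper, $h=\sum f_i^2$ already suffices.

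Once you have $\mathfrak a\subseteq\mathfrak m_p$, your closing step $\mathfrak n=\mathfrak m_pA_k$ goes through. As a byproduct, the same lemma applied to $\ann(a)$ shows that your kernel $N$ vanishes, so $A_k=T^{-1}A$.
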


\begin{proof} (i): Assume $\mathfrak m\subset A_k$ maximal, but $A_k/\mathfrak m\ncong k.$ Then there exists an $f\in\mathfrak m$ s.t. $f\notin\mathfrak m_p$ for all $\mathfrak m_p, p\in\pts A_k.$ Then $f=\rho(f)$ is a unit, which contradicts the fact that $\mathfrak m$ is maximal. (ii) follows by the universal property of localization, and (i) and (ii) implies (iii).
\end{proof}

\begin{definition} For any field $k,$ an affine $k$-variety is $(\pts_k A,\mathcal O_{\pts_k A})$ for a $k$-algebra $A.$ A morphism  $\pts_k A\rightarrow \pts_k B$ is a $k$-algebra homomorphism $\phi:B\rightarrow A.$ An abstract $k$-variety is a ringed space $(X,\mathcal O_X)$ covered by open affine $k$-varieties.
\end{definition}

\section{Categorical Differential Geometry}

The polynomial ring in $n$ variables over the real numbers is denoted \newline $\mathbb R[x_1,\dots,x_n]=\mathbb R[n].$ The $\mathbb R$-algebra of infinitely differentiable functions $f:\mathbb R^n\rightarrow\mathbb R$ is denoted $C^\infty(\mathbb R^n).$ We observe that for $C=\mathbb R[n]$ or $C=C^\infty(\mathbb R^n)$ the map $$\Phi:\mathbb R^n\rightarrow\pts_{\mathbb R}(C)$$ sending a point $p\in\mathbb R^n$ to the $\mathbb R$-algebra homomorphism $\phi:C\rightarrow\mathbb R,\ \phi(f)=f(p),$ is bijective, with inverse sending $p:C\rightarrow\mathbb R$ to $(p(x_i),\dots,p(x_n)).$

We recall from \cite{Lee18} that a smooth manifold $X$ is covered by an indexed smooth cover of opens, $X=\cup_{i\in I}U_i,$ where for each $i\in I,U_i\cong\mathbb R^n,$ i.e. $U_i$ is diffeomorphic to $\mathbb R^n.$ Define, for each open $U\subseteq X,$  $\mathcal O_X(U):=C^\infty(U).$ We obtain a sheaf with smooth transition morphisms. This definition is equivalent to the usual definition, because every open subset $U\subseteq\mathbb R^n$ is diffeormorphic to $\mathbb R^n.$ See that $$\tan(\frac{\pi}{e-d}x+\frac{\pi(d+e)}{2(d-e)}):\langle d,e\rangle\rightarrow\mathbb R$$ is a diffeormorphism, and lift to the product topology. Also notice at an open interval $\langle a,b\rangle \subseteq\mathbb R$ can be identified with $\langle a,b\rangle=D(f)=\{x\in\mathbb R|f(x)\neq 0\}$ for a smooth (bump) function $f.$

\begin{definition} An Algebraic Smooth $n$-variety is a topological space $X$ with a sheaf of $\mathbb R$-algebras $\mathcal O_X$ and a covering $X=\cup_{i\in I}U_i$ of open subsets such that for each $i,$ $(U_i,\mathcal O_X|_{U_i})\simeq (\mathbb R^n,\mathcal O^A_{\mathbb R^n})$ where $A=\mathbb R[n].$ The topology has basis $\{D(f)|f\in A\}.$
\end{definition}

\begin{definition} A Smooth Categorical $n$-manifold is a topological space $M$ with a sheaf of  $\mathbb R$-algebras $\mathcal O_X$ and a covering $X=\cup_{i\in I}U_i$ of open subsets such that for each $i,$ $(U_i,\mathcal O_X|_{U_i})\simeq (\mathbb R^n,\mathcal O^A_{\mathbb R^n})$ where $A=C^\infty(\mathbb R^n).$
\end{definition}

These two definitions are identical, so that any geometrical concept on smooth manifolds can be transformed directly to smooth $n$-varieties because their sets of $\mathbb R$-points are in bijective correspondence.

We end this section with the Strong Withney Embedding Theorem: Every smooth $n$-manifold is diffeomorphic to a submanifold $S\subseteq\mathbb R^{2n}.$ This says that the geometry is in the embedding.

\section{Local Representability of Associative Algebras}
For any field $k,$ let the category of pointed $k$-algebras $\cat{Alg}_k^\ast$ have as objects $k$-algebras with a fixed point $p^\ast:A\rightarrow k.$ The morphisms are commutative diagrams $$\xymatrix{A\ar[r]^\phi\ar[d]_{p^\ast}&B\ar[dl]^{q^\ast}\\k}.$$ Let $\cat L\subseteq\cat{Alg}_k^\ast\subseteq\cat{Alg}_k$ be a subcategory. For each  $p:A\rightarrow k\in\cat{Alg}_k^\ast$ we have the functor $$\hmm_k^p(A,-):\cat L\rightarrow\sets.$$

\begin{definition} If there exists a subcategory $\cat L\subseteq\cat{Alg}_k^\ast$ such that $\hmm_k^p(A,-)$ is represented by $A_p$ for all $(A,p),$ $\cat L$ is called a localizing subcategory, and $A_p$ is called locally representing in $p.$
\end{definition}

\begin{example} The set of local (commutative) $k$-algebras is a localizing category in the category of commutative algebras.
\end{example}

\begin{lemma} Let $f:A\rightarrow B$ be a homomorphism of associative rings where $D\subseteq B$ is a division ring. Let $S(f)=f^{-1}(D\setminus\{0\}).$ Then there exists an associative ring $A_{S(f)}$ together with  homomorphisms $g:A\rightarrow A_{S(f)}$ and $i:A_{S(f)}\rightarrow B$ such that $g(s)$ is a unit whenever $s\in S(f),$  and such that $i\circ g=f.$ If $C$ is any associative ring with homomorphisms $h,j$ as above (or in the diagram below), there exist a unique homomorphism $\psi:A_{S(f)}\rightarrow C$ such that $\psi\circ g=h.$
\end{lemma}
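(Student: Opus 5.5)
I will build $A_{S(f)}$ as a universal localization by generators and relations, in the style of Cohn's universal inversion, and then factor $f$ through it. Concretely, let $F=A\ast_{\mathbb Z}\mathbb Z\langle t_s\mid s\in S(f)\rangle$ be the free product of $A$ with a free ring on new symbols $t_s$, one for each $s\in S(f)$. Let $I\subseteq F$ be the two-sided ideal generated by the elements $st_s-1$ and $t_ss-1$ for $s\in S(f)$. I put $A_{S(f)}=F/I$ and let $g:A\rightarrow A_{S(f)}$ be the composite $A\rightarrow F\rightarrow F/I$. By construction $g(s)$ is a unit for every $s\in S(f)$, with inverse the class of $t_s$.

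The first step is to construct $i:A_{S(f)}\rightarrow B$ with $i\circ g=f$. By the universal property of the free product, I define a ring map $F\rightarrow B$ that is $f$ on $A$ and sends $t_s$ to $f(s)^{-1}$. This inverse exists because $f(s)\in D\setminus\{0\}$ and $D$ is a division ring. I read the statement as saying that $D$ is a division subring of $B$ sharing the identity of $B$, so that $f(s)^{-1}$ computed in $D$ is also a two-sided inverse in $B$. This map kills the generators $st_s-1$ and $t_ss-1$ of $I$, hence it factors through $F/I$ to give $i$, and $i\circ g=f$ holds on $A$.

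The second step is the universal property. Let $C$ be an associative ring with $h:A\rightarrow C$ such that $h(s)$ is a unit for all $s\in S(f)$; the extra map $j:C\rightarrow B$ with $j\circ h=f$ is only part of the diagram and is not needed to invert. I define $F\rightarrow C$ as $h$ on $A$ and $t_s\mapsto h(s)^{-1}$. This map vanishes on $I$, so it descends to $\psi:A_{S(f)}\rightarrow C$ with $\psi\circ g=h$. For uniqueness, $A_{S(f)}$ is generated as a ring by $g(A)$ together with the inverses $g(s)^{-1}$. Any $\psi'$ with $\psi'\circ g=h$ must send $g(s)^{-1}$ to the unique inverse $h(s)^{-1}$, because two-sided inverses are unique in an associative ring. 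Hence $\psi'$ agrees with $\psi$ on a set of generators, and $\psi'=\psi$. If $j$ is present, the same generator argument also gives $j\circ\psi=i$, since both maps send $g(a)\mapsto f(a)$ and $g(s)^{-1}\mapsto f(s)^{-1}$.

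The main obstacle is interpretive rather than computational. In the noncommutative setting, elements of $A_{S(f)}$ are not fractions $as^{-1}$ unless an Ore condition holds, so the construction must be the free, generators-and-relations one above and not a calculus of fractions. I also have to make sure that inverting in $D$ gives inverses in $B$, which is where the assumption that $D$ is a unital subring of $B$ enters. The lemma does not claim that $g$ is injective or that $A_{S(f)}$ is nonzero, and the existence of $i$ already shows that $A_{S(f)}\neq 0$ whenever $B\neq 0$. So I would state the unital-subring convention explicitly and otherwise rely only on the universal properties of free products and quotients.
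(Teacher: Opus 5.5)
Your proof is correct, but it takes a different route from the paper. The paper does not build a free object. It takes $A_{S(f)}$ to be the subring of $B$ generated by $\im f$ and the elements $f(s)^{-1}$, with $i$ the inclusion. It then defines $\psi$ on noncommutative polynomial expressions by $P(\{f(a_i)\},\{f(s_i)^{-1}\})\mapsto P(\{h(a_i)\},\{h(s_i)^{-1}\})$, using $j(h(s)^{-1})=f(s)^{-1}$ to see that $j$ carries the second expression to the first. This gives a concrete ring with $i$ injective. However, the check through $j$ does not show that $\psi$ is well defined: if an expression vanishes in $B$, one only learns that its counterpart in $C$ lies in $\ker j$.

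In fact the subring of $B$ can fail the stated universal property. Take $A=\mathbb Z[x]$, $B=D=\mathbb Q$ and $f(p)=p(0)$. The paper's ring is then $\mathbb Q$, with $g(x)=0$. Now let $C=\mathbb Z[x]_{S(f)}$ be the ordinary commutative localization, with $h$ canonical and $j(p/s)=p(0)/s(0)$. This $C$ satisfies all the hypotheses and has $h(x)\neq 0$. So no $\psi$ with $\psi\circ g=h$ can exist, since $\psi(g(x))=\psi(0)=0$.

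Your universal localization $F/I$ avoids this problem. Existence of $\psi$ comes from the universal properties of the free product and the quotient, with no use of $j$. Uniqueness comes from the fact that $g(A)$ and the $g(s)^{-1}$ generate. The paper's ring is exactly the image $i(F/I)\subseteq B$ of yours. The cost of your version is that $i$ and $g$ need not be injective, which the statement never requires. Your explicit assumption that $D$ is a unital subring of $B$ is one the paper also uses without stating it.
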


\begin{proof} Let $A_{S(f)}\subseteq B$ be the subring generated by $\im f$ together with the set $\{f(s)^{-1}|s\in S(f)\}.$ Consider the diagram $$\xymatrix{A_{S(f)}\ar@/^/[rrd]^i&&\\&A\ar[ul]^g\ar[dl]_h\ar[r]^f&B\supseteq D\\C\ar@/_/[rru]_j&&}$$ where we notice that $i$ is injective. Every element in $A_{S(f)}$ is a polynomial $P(\{f(a_i)\},\{f(s_i)^{-1}\}$ in a finite number of variables, and we have  that $$1=j(h(s)h(s)^{-1})=j(h(s))j(h(s)^{-1})\Rightarrow j(h(s)^{-1})=j(h(s))^{-1}=f(s)^{-1}.$$
It follows that $j(P(\{h(a_i)\},\{h(s_i)^{-1}\})=P(\{f(a_i)\},\{f(s_i)^{-1}\})$ so that 
the unique homomorphism $$\psi:A_{S(f)} \rightarrow C$$ commuting in the above diagram, defined by  $\psi(f(a))=h(a),\psi(f(s)^{-1})=h(s)^{-1},$ is well defined.
\end{proof}

\begin{theorem} There exists a localizing category $\cat L\subseteq\cat{Rings^\ast}\subseteq\cat{Rings}$ such that for each pair $\eta:A\rightarrow\enm_{\mathbb Z}(M)$ of an associative ring $A$ and a simple right module $M,$ there exists a local representing object $A_M.$
\end{theorem}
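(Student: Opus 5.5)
The plan is to apply the preceding Lemma with $B=\enm_{\mathbb Z}(M)$ and $f=\eta$, using Schur's lemma to supply the division ring. Since $M$ is a simple right $A$-module, $D:=\enm_A(M)$ is a division ring, and it sits inside $\enm_{\mathbb Z}(M)$ as the endomorphisms commuting with the $A$-action. So $\eta:A\rightarrow B$ together with $D\subseteq B$ meets the hypotheses of the Lemma. Set $S(\eta)=\eta^{-1}(D\setminus\{0\})$; these are the elements of $A$ acting on $M$ as nonzero $A$-linear maps, hence as automorphisms. Define
$$A_M:=A_{S(\eta)}\subseteq\enm_{\mathbb Z}(M),$$
the subring generated by $\im\eta$ and $\{\eta(s)^{-1}\mid s\in S(\eta)\}$. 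It comes with $g:A\rightarrow A_M$ and the inclusion $i:A_M\rightarrow\enm_{\mathbb Z}(M)$, where $i\circ g=\eta$. Through $i$, $M$ becomes a right $A_M$-module, and it is simple because it is already simple over the image of $A$. This pair $(A_M,i)$ plays the role of the point $p$ at which we localize.

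Next I specify $\cat L$. Take $\cat{Rings}^\ast$ to be the category of pairs $(C,j)$, where $j:C\rightarrow\enm_{\mathbb Z}(N)$ is a ring with a simple module, in analogy with $\cat{Alg}_k^\ast$. The "point" is the structure map to the endomorphism ring, replacing $p:A\rightarrow k$. Let $\cat L$ be the full subcategory of those $(C,j)$ in which every element acting through an element of the Schur division ring $D_N\setminus\{0\}$ is already a unit of $C$. This is the noncommutative analogue of "local". Morphisms $(A,\eta)\rightarrow(C,j)$ are ring maps $h$ compatible with the points, i.e.\ $j\circ h=\eta$ after identifying $N=M$. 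The functor to represent is then $\hmm^M(A,-):\cat L\rightarrow\sets$.

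The third step is representability. We must check that $(A_M,i)$ lies in $\cat L$ and that the natural map
$$\hmm_{\cat L}(A_M,C)\rightarrow\hmm^M(A,C),\qquad \psi\mapsto\psi\circ g,$$
is bijective. Injectivity holds because $A_M$ is generated by $g(A)$ and the inverses $g(s)^{-1}$, and a ring map that respects inverses is determined by its values on generators. Surjectivity is exactly the universal property in the Lemma. Given $h:A\rightarrow C$ and $j:C\rightarrow\enm_{\mathbb Z}(M)$ with $j\circ h=\eta$, each $h(s)$ for $s\in S(\eta)$ is a unit in $C$ because $(C,j)\in\cat L$. The Lemma then gives a unique $\psi$ with $\psi\circ g=h$, and the compatibility $j\circ\psi=i$ follows from the displayed computation $j(h(s)^{-1})=\eta(s)^{-1}$ in the Lemma's proof.

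The main obstacle is showing that $A_M$ itself belongs to $\cat L$. Adjoining the inverses $\eta(s)^{-1}$ changes the ring, so the Schur division ring of $M$ as an $A_M$-module could be smaller than $\enm_A(M)$, and new elements of $A_M$ might act as nonzero elements of $D$ without being inverted. The argument I would try first uses $A_M\subseteq\enm_{\mathbb Z}(M)$. Any $x\in A_M$ acting as a nonzero $A_M$-endomorphism is a nonzero element of $\enm_{A_M}(M)\subseteq\enm_A(M)=D$, so $x$ is invertible in $\enm_{\mathbb Z}(M)$. The difficulty is that its inverse must lie in $A_M$. If that fails, I would instead iterate the construction, or define $A_M$ as the intersection of all subrings of $\enm_{\mathbb Z}(M)$ containing $\im\eta$ that are closed under inverting elements of $D\setminus\{0\}$ (a rational-closure construction). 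This object is in $\cat L$ by construction, and its universal property follows by the same polynomial-evaluation argument as in the Lemma, applied to rational expressions.
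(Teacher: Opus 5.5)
You are following the paper's own route. The paper also gets $A_M$ from the preceding Lemma, as the subring of $\enm_{\mathbb Z}(M)$ generated by $\im\eta$ and the inverses of those $\eta(s)$ lying in $\enm_A(M)\setminus\{0\}$, with Schur's lemma supplying the division ring. The problem is the step you hand off to that Lemma, namely surjectivity of $\psi\mapsto\psi\circ g$, and it fails. Since $A_M\subseteq\enm_{\mathbb Z}(M)$, the map $g$ factors through $\eta(A)\cong A/\ann_A(M)$ and so kills $\ann_A(M)$. Hence no test map $h$ with $h(\ann_A(M))\neq 0$ can factor through $g$. The Lemma's proof only shows $j\bigl(P(\{h(a_i)\},\{h(s_i)^{-1}\})\bigr)=P(\{f(a_i)\},\{f(s_i)^{-1}\})$. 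That makes $\psi$ well defined only when $j$ is injective, and your $\cat L$ does not require this.

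Here is a concrete failure. Take $A=k[x]$ and $M=A/(x)=k$. Then $\eta(A)=k=\enm_A(M)$, so $A_M=k$ (also for your rational-closure variant) and $g(p)=p(0)$. Let $C=k[x]_{(x)}$, let $j\colon C\rightarrow k\subseteq\enm_{\mathbb Z}(k)$ be the residue map, and let $h$ be the canonical map. We have $\enm_C(k)=k$, and $j(c)\neq 0$ exactly when $c$ is a unit, so $(C,j)\in\cat L$; also $j\circ h=\eta$. Yet $g(x)=0\neq x=h(x)$, so no $\psi$ with $\psi\circ g=h$ exists. Thus $A_M$ does not represent $\hmm^M(A,-)$ on your $\cat L$. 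In the commutative case your construction returns the residue field $A/\mathfrak m$, whereas the representing object should be $A_{\mathfrak m}$. The same example with $B=D=k$ and $f$ evaluation at $0$ shows the Lemma itself is false as stated.

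There is a second unjustified step: ``each $h(s)$ is a unit in $C$ because $(C,j)\in\cat L$.'' Your condition on $(C,j)$ only inverts elements acting through $\enm_C(M)\setminus\{0\}$. But $\enm_C(M)$ is the commutant of $j(C)\supseteq\eta(A)$, so it can be strictly smaller than $\enm_A(M)$, and then $\eta(s)$ need not lie in it. For commutative $A$ this can be repaired: $\eta(A)=A/\mathfrak m$ is a field, so pick $s'$ with $\eta(ss')=1$; then $h(s)h(s')=h(s')h(s)$ is a unit. For general $A$ you need either a real argument or a different $\cat L$.

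To close the main gap, $A_M$ must not be built inside $\enm_{\mathbb Z}(M)$. Instead take the ring obtained from $A$ by freely adjoining inverses of the elements of $S(\eta)$, i.e.\ the universal localization. It still maps to $\enm_{\mathbb Z}(M)$, because each $\eta(s)$ is invertible there. By construction every $h\colon A\rightarrow C$ sending $S(\eta)$ to units factors through it uniquely, compatibly with $j$. For commutative $A$ it is exactly $A_{\mathfrak m}$.
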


Let $$\eta^A_M=\oplus_{i=1}^r\eta^A_i:A\rightarrow\enm_{\mathbb Z}(\oplus_{i=1}^rM_i)$$ be a set of $r>0$ simple right $A$-modules and put $M=\oplus_{i=1}^rM_i.$ As any $A$-linear homomorphism is also $\mathbb Z$-linear, we have a canonical ring homomorphism $$\gamma:D_M=\oplus_{i=1}^r\enm_A(M_i)\rightarrow (\hmm_{\mathbb Z}(M_i,M_j))=\enm_\mathbb Z(M)=E_M.$$ This homomorphism is nonzero because it maps the identity to the identity, and so injective because each component $\enm_A(M_i)$ is a division ring. Denote by $D_M^\ast$  the set of units in $D_M,$ that is $D_M^\ast=\{s\in D_M|\gamma_i(s)\neq 0, 1\leq i\leq r\}.$

\begin{definition} The local function ring of $A$ in $M$ is the subring $A_M\subseteq E_M$ of $E_M$ generated over $\im\eta^A_M$ by the subset $\{\eta^A_M(s)^{-1}|\eta^A_M(s)\in D_M\setminus(0)\subset E_M\}.$
\end{definition}

By definition, we find that $A_M=\prod_{i=1}^nA_{M_i}$ is the categorical product of $A_{M_i}.$ 

\begin{lemma}
Let $U$ be any set of simple $A$-modules. Then $$A_U=\underset{\underset{N\subseteq U}\leftarrow}\lim A_N$$ is the categorical product over $U,$ where the inverse limit is taken over finite sets $N$ of simple modules in $U.$ 
\end{lemma}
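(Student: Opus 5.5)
We must show $A_U=\varprojlim_{N\subseteq U}A_N$ is the categorical product $\prod_{M\in U}A_M$. The plan is to reduce everything to the finite case already recorded after the definition of the local function ring, namely $A_N=\prod_{M\in N}A_M$ for a finite set $N$ of simple modules. Then we use that inverse limits of finite products along projections compute the full product.

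First we set up the inverse system. For finite $N\subseteq N'\subseteq U$ we write $M_{N'}=\oplus_{M\in N'}M$. Since $A_{N'}=\prod_{M\in N'}A_M$ by the finite statement, there is the canonical projection $\pi_{N'N}:A_{N'}\rightarrow A_N$ forgetting the factors in $N'\setminus N$. Concretely, this is restriction of the block-diagonal endomorphisms in $E_{M_{N'}}$ to the summand $M_N$. We check that this is well defined on generators: $\eta^A_{M_{N'}}(a)$ maps to $\eta^A_{M_N}(a)$. An inverse $\eta^A_{M_{N'}}(s)^{-1}$, with $s$ acting as a nonzero element of $D_{M_{N'}}$ (invertible on every summand), maps to $\eta^A_{M_N}(s)^{-1}$. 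Hence $\pi_{N'N}$ lands in $A_N$. We then verify $\pi_{N'N}\circ\pi_{N''N'}=\pi_{N''N}$, so that we have an inverse system indexed by the directed poset of finite subsets of $U$.

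Second, we identify the limit. The product $P=\prod_{M\in U}A_M$ has projections $P\rightarrow\prod_{M\in N}A_M=A_N$, and these are compatible with the $\pi_{N'N}$. This gives a map $P\rightarrow\varprojlim_N A_N$. Conversely, an element of the limit is a compatible family $(x_N)_N$. Taking singletons $N=\{M\}$ yields components $x_{\{M\}}\in A_M$, hence an element of $P$. By compatibility with the projections $A_N\rightarrow A_{\{M\}}$ for $M\in N$, together with $A_N=\prod_{M\in N}A_M$, the family $(x_N)$ is determined by its singleton components. So the two maps are mutually inverse ring homomorphisms. Equivalently, one checks the universal property directly: a compatible family of maps $C\rightarrow A_N$ is the same as a family $C\rightarrow A_M$ for $M\in U$, which is exactly the universal property of $\prod_{M\in U}A_M$. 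The argument takes place in $\cat{Rings}$, where limits are computed on underlying sets.

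The main obstacle is the first step. We need the finite identification $A_N=\prod_{M\in N}A_M$ to be genuinely compatible with the transition maps. In particular, an element $s$ whose image lies in $D_{M_{N'}}\setminus(0)$ but is not a unit in every component (zero on some summand) must not generate problematic inverses. The first thing I would try is to work throughout with $D_M^\ast$, the elements invertible on every summand, as the paper's notation suggests. I would then show that $A_N$ is generated componentwise, so that the restriction maps are exactly the product projections. With that in place, the remaining step is the standard fact that a directed limit of finite subproducts is the full product.
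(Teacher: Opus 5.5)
Your strategy is the paper's. Its proof is only \emph{universal property of inverse limits}, and it rests on the finite identity $A_N=\prod_{M\in N}A_M$, which the paper asserts ``by definition'' right after defining $A_M$. Several of your steps are sound in general:
\begin{itemize}
\item $D_M\setminus(0)$ must indeed be replaced by $D^\ast_M$.
\item With that change, restriction to the summand $M_N$ gives well-defined transition maps $A_{N'}\rightarrow A_N$.
\item The singleton map $\varprojlim_N A_N\rightarrow\prod_{M\in U}A_M$ is injective, because every element of $A_N$ is block-diagonal with $M$-block in $A_M$.
\end{itemize}
Surjectivity, however, is equivalent to $A_N\supseteq\prod_{M\in N}A_M$ for every finite $N$. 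That is exactly the step you flag, and it is false in general. So this is a genuine gap, and the paper shares it.

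For a counterexample, let $A=k\langle x,\partial\rangle/(\partial x-x\partial-1)$ be the first Weyl algebra over a field $k$ of characteristic $0$. Take the right modules $M_1=A/\partial A\cong k[x]$ and $M_2=A/xA\cong k[\partial]$. They are simple and non-isomorphic, since only $M_1$ has a nonzero element killed by $\partial$. They are faithful, since $A$ is simple. Now $\eta^A_{M_i}(s)$ is $A$-linear iff $as-sa\in\ann(M_i)=0$ for all $a\in A$, i.e.\ iff $s$ lies in the centre $k$. So only inverses of nonzero scalars are ever adjoined. Hence $A_{\{M_1,M_2\}}=\eta^A_{M_1\oplus M_2}(A)$ is a nonzero image of the simple ring $A$, and so is simple. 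On the other hand, $A_{M_1}\times A_{M_2}$ has the proper nonzero ideal $A_{M_1}\times 0$. For $U=\{M_1,M_2\}$ the inverse limit is $A_{\{M_1,M_2\}}$ itself, so the lemma fails. No argument can show that $A_N$ is ``generated componentwise'' here.

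What does go through is the statement under a Chinese-remainder hypothesis, and you should add it. Suppose the annihilators $\ann(M)$, $M\in U$, are pairwise comaximal. Then for each finite $N$ there are $a_M\in A$ with $\eta^A_{M_N}(a_M)=e_M$, the projection onto the summand $M$. Whenever $\eta^A_M(s)\in\enm_A(M)\setminus(0)$, the element $s'=a_Ms+\sum_{M'\neq M}a_{M'}$ has $\eta^A_{M_N}(s')\in D^\ast_{M_N}$: its block at $M$ is $\eta^A_M(s)$ and every other block is the identity. Multiplying the generators of $A_N$ (including these inverses) by $e_M$ then produces every block-diagonal tuple with entries in the $A_M$. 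Thus $A_N=\prod_{M\in N}A_M$, with the restrictions as product projections, and your limit argument finishes.

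This hypothesis holds in two relevant cases:
\begin{itemize}
\item the one-dimensional modules ($k$-points) the paper actually uses, since distinct points have distinct two-sided kernels of codimension one;
\item pairwise non-isomorphic finite-dimensional simple modules over a $k$-algebra, whose annihilators are distinct maximal two-sided ideals.
\end{itemize}
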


\begin{proof} Follows by the universal property of inverse (projective) limits.
\end{proof}

\section{Associative Algebraic Varieties}
We apply the theory previously defined to the one-dimensional simple modules only. A possible reference is the book $\cite{S23}.$
Let $k$ be any field, $A$ any associative, not necessarily commutative, $k$-algebra. Then as before, $\pts_k(A)=\hmm_k(A,k)$ with a basis for the topology given by the subsets $D(f)=\{p\in\pts_k(A)|f(p)=p(f)\neq 0\}$ for $f\in A.$  We define a sheaf on $X=\pts_k A$ by for each open $U\subseteq X$ considering the natural morphism $$\rho(U):A\rightarrow\prod_{p\in U}A_p.$$ This is a sheaf by the universal property of products.

\begin{definition}
\begin{itemize}
\item[(i)] An associative affine variety is a pair $(\pts_k A,\mathcal O_{\pts_k A}).$ A morphism $\pts_k A\rightarrow\pts_k B$ is a morphism of ringed spaces induced by a $k$-algebra homomorphism $B\rightarrow A.$
\item[(ii)] An associative algebraic variety is a ringed topological space which is covered by open affine varieties.
\item[(iii)] A morphism of associative varieties is a morphism of ringed spaces which is locally induced by affine morphisms.
\end{itemize}
\end{definition}

\section{Vector Bundles on Associative Varieties}

We start with the definition of vector bundles on a smooth manifold $M.$ Following Lee \cite{Lee18}, Vector Bundle Chart Lemma, Lemma A.32, p. 382, suppose given for each $p\in M$ a real vector space $E_p$ of some fixed dimension $k.$ Let $E=\coprod_{p\in M}E_p,$ and let $\pi:E\rightarrow M$ be the map that takes each element of $E_p$ to the point $p.$ Suppose furthermore that we are given

\begin{itemize}
\item[(i)] an indexed open cover $\{U_\alpha\}_{\alpha\in A}$ of $M$;
\item[(ii)] for each $\alpha\in A,$ a bijective map $\Phi_\alpha:\pi^{-1}(U_\alpha)\rightarrow U_\alpha\times\mathbb R^k$ whose restriction to each $E_p$ is a linear isomorphism from $E_p$ to $\{p\}\times\mathbb R^k\cong\mathbb R^k$;
\item[(iii)] For each $\alpha,\beta\in A$ such that $U_\alpha\cap U_\beta\neq\emptyset,$ a smooth map $\tau_{\alpha\beta}:U_\alpha\cap U_\beta\rightarrow\operatorname{GL}(k,\mathbb R)$ such that the composite map $\Phi_\alpha\circ\Phi_\beta^{-1}$ from $(U_\alpha\cap U_\beta)\times\mathbb R^k$ to itself has the form $$\Phi_\alpha\circ\Phi_\beta(p,v)=(p,\tau_{\alpha\beta}(p)v).$$
\end{itemize}

Then $E$ has a unique smooth manifold structure making it a smooth vector bundle of rank $k$ over $M,$ with $\pi$ as projection and the maps $\Phi_\alpha$ as smooth local trivializations.

Notice in particular item (iii) above. It says that the induced morphisms $\mathbb R^k\rightarrow\mathbb R^k$ is linear, corresponding to an isomorphism of algebras  $\tau:\mathbb R[n]\rightarrow\mathbb R[n].$ 

\begin{lemma} For any field $k,$ a homogeneous homomorphism $f:k[n]\rightarrow k[n]$ defines a linear homomorphism $l(f):k^n\rightarrow k^n.$ A linear map $l:k^n\rightarrow k^n$ defines an algebra homomorphism $f(l):k[n]\rightarrow k[n].$ Also $f=f(l(f))$ and $l=l(f(l)).$ We have that $f$ is an isomorphism if and only if $l(f)$ is an isomorphism.
\end{lemma}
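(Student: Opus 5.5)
We read "homogeneous homomorphism" as a $k$-algebra homomorphism $f:k[n]\rightarrow k[n]$ of degree zero, so that $f$ maps the space $V=k[n]_1=\bigoplus_{i=1}^n kx_i$ of linear forms into itself. The plan is to use the fact that $k[n]$ is the symmetric algebra on $V$: a $k$-algebra homomorphism out of $k[n]$ is the same thing as a choice of images for $x_1,\dots,x_n$. Both constructions then reduce to the dictionary between linear maps $V\rightarrow V$ and $n\times n$ matrices over $k$.

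\emph{The constructions.} Given $f$, write $f(x_j)=\sum_{i=1}^n a_{ij}x_i$ with $a_{ij}\in k$. This is possible precisely because $f$ is homogeneous of degree zero, so $f(x_j)$ is a linear form. Let $l(f):k^n\rightarrow k^n$ be multiplication by the matrix $(a_{ij})$. Conversely, given a linear $l$ with matrix $(a_{ij})$, the universal property of the polynomial ring (freeness on $x_1,\dots,x_n$ in the category of commutative $k$-algebras) gives a unique $k$-algebra homomorphism $f(l)$ with $f(l)(x_j)=\sum_i a_{ij}x_i$. This $f(l)$ is homogeneous, since it maps linear forms to linear forms and hence maps each $k[n]_d$ into itself.

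\emph{Mutual inverseness.} The identity $l(f(l))=l$ holds by construction, since both sides have matrix $(a_{ij})$. For $f(l(f))=f$, note that both are $k$-algebra homomorphisms agreeing on the generators $x_j$, so they are equal by the uniqueness part of the universal property. Next we check compatibility with composition. For $f,g$ we compute $(f\circ g)(x_j)=f(\sum_i b_{ij}x_i)=\sum_i b_{ij}\sum_m a_{mi}x_m$, so the matrix of $f\circ g$ is $(a_{ij})(b_{ij})$. Hence $l(f\circ g)=l(f)\circ l(g)$ and $l(\id)=\id$. Depending on conventions this may be an anti-homomorphism (covariant versus contravariant identification of $V$ with $k^n$); either version suffices for the last claim.

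\emph{The isomorphism statement.} If $f$ is an isomorphism, its inverse $f^{-1}$ is again an algebra homomorphism. One must check that $f^{-1}$ is homogeneous. Since $f$ is bijective and preserves each graded piece $k[n]_d$, and each $k[n]_d$ is finite-dimensional, the restriction of $f$ to each piece is injective and hence surjective onto that piece. Therefore $f^{-1}$ also preserves degrees. Functoriality then gives $l(f)l(f^{-1})=l(\id)=\id$, so $l(f)$ is invertible. Conversely, if $l(f)$ is invertible, then $f(l(f)^{-1})$ is a two-sided inverse of $f=f(l(f))$, by functoriality of $l\mapsto f(l)$ together with the mutual-inverse identities.

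The main obstacle is the interpretive one: fixing exactly what ``homogeneous homomorphism'' means, so that $f(x_j)$ is forced to be linear with no constant or higher terms, and keeping a consistent convention for rows versus columns so that composition is respected. The step needing genuine care is showing that the inverse of a homogeneous isomorphism is homogeneous, which the graded-piece dimension argument settles.
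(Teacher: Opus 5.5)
Your proposal is correct and takes essentially the paper's route. The paper's one-line proof applies $\operatorname{Sym}_k$ to $V$, identifying $k[n]=\operatorname{Sym}_k(V)$ with $V=k[n]_1$, so that degree-preserving algebra homomorphisms are exactly $\operatorname{Sym}_k$ of linear endomorphisms of $V$; you make explicit what it leaves implicit, namely the inverse construction by restriction to degree one, compatibility with composition, and the check that the inverse of a bijective graded homomorphism is again graded.
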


\begin{proof} This follows by applying  the functor $\operatorname{Sym}_k$ to $V.$
\end{proof}

When $M$ is a differentiable manifold, a bundle appears as in the Vector Bundle Chart Lemma. This means that there can be global relations in the pointwise $k$-dimensional vector spaces, as will be clear by the below definition. We generalize to the following.

\begin{definition}\label{bundledef}
\begin{itemize}
\item[(i)] Let $M$ be a categorical differentiable $n$-manifold. A categorical differentiable $n+k$-manifold $E$ is a vector bundle of rank $k$ over $M$ if there is a morphism $\pi:E\rightarrow M$ such that for each open chart $U\subseteq M,$ there is an open chart $V\subseteq\pi^{-1}(U)\subseteq E$ such that the  morphism $$\pi(V):(V,\mathcal O_E|_V)\rightarrow (U,\mathcal O_M|_U)$$ is induced by $$f:C^\infty(\mathbb R^n)\rightarrow (C^\infty(\mathbb R^n)\otimes_{\mathbb R}\mathbb R[k+m])/I=C^\infty(\mathbb R^n)[k+m]/I,$$ where $I$ is an ideal  such that for all points $x:C^\infty(\mathbb R^n)\rightarrow\mathbb R,$ there exists a commutative diagram 
$$\xymatrix{C^\infty(\mathbb R^n)\ar[r]^-f\ar[d]_x&(C^\infty(\mathbb R^n)\otimes_{\mathbb R}\mathbb R[k+m])/I\ar[d]^{x\otimes\id}\\\mathbb R\ar[r]&\mathbb R[k].}$$

\item[(ii)] Let $X$ be a smooth algebraic $n$-variety (i.e. covered by $\pts_k\mathbb R[n]$) . A $k$-bundle $E$ on $X$  is a vector bundle of rank $k$ over $M$ if there is a morphism $\pi:E\rightarrow M$ such that for each open chart $\pts_{\mathbb R} A=U\subseteq M,$ there is an open chart $\pts_{\mathbb R} ((A\otimes_{\mathbb R}\mathbb R\langle k+m\rangle)/I)=V\subseteq\pi^{-1}(U)\subseteq  E$ such that the  morphism $$\pi(V):(V,\mathcal O_E|_V)\rightarrow (U,\mathcal O_M|_U)$$ is induced by $$f:A\rightarrow (A\otimes_{\mathbb R} \mathbb R\langle k+m\rangle)/I$$ where $I$ is an ideal such that for all points $x:A\rightarrow\mathbb R,$ there exists a commutative diagram 
$$\xymatrix{A\ar[r]^-f\ar[d]_x&(A\otimes_{\mathbb R} \mathbb R\langle k+m\rangle)/I\ar[d]^{x\otimes\id}\\\mathbb R\ar[r]&\mathbb R\langle k\rangle.}$$
\end{itemize}
\end{definition}

Observe that in point (ii) in Definition \ref{bundledef}, every point $x\in X$ is assigned the vector space $\pts_{\mathbb R}\mathbb R\langle k \rangle\simeq\mathbb R^k.$

\section{Tangent Spaces on Associative Varieties}

We start the definition of the Phase Space: Let $k$ be a field and let $A$ be an associative $k$-algebra.

\begin{definition} The category $A\text{-}\cat{Alg}_k$ has as objects the $k$-algebra homomorphisms $A\overset\phi\rightarrow B$ from $A$ to a $k$-algebra $B.$ A morphism in this category, is a commutative diagram $$\xymatrix{B\ar[rr]^f&&C\\&A.\ar[ul]^{\phi_1}\ar[ur]_{\phi_2}&}$$
\end{definition}

Consider the functor $$\der_k(A,-):A\text{-}\cat{Alg}_k\rightarrow\sets$$ where $\der_k(A,B)$ is the set of $k$-derivations from $A$ to $B.$ A derivation $d:A\rightarrow B$ is a $k$-linear map which satisfies  $d(ab)=ad(b)+d(a)b.$

Given a morphism of $A$-algebras over $k,$ $f:B\rightarrow C$ and a derivation $d:A\rightarrow B.$ Then $f(d(ab))=f((da)b+a(db))=f(da)b+af(db)$ proving that $\der_k(A,-)$ is a functor.

For the $k$-algebra $A,$  consider the set $dA=\{da|a\in A\},$ and form the quotient of the polynomal algebra $A\langle dA\rangle/D$ where $D$ is the two-sided ideal $D=\langle\{d(ab)-a(db)-(da)b|a,b\in A\}\rangle.$ The map $d:A\rightarrow A\langle dA\rangle/D$ given by $d(a)=da$ is a derivation by definition.

\begin{definition} The associative $A$-algebra over $k,$ $\ph(A)=A\langle dA\rangle/D$ is called the Phase Space of  $A.$ It comes with a $k$-derivation $d:A\rightarrow\ph(A).$
\end{definition}

\begin{lemma} The functor $\der_k(A,-):A\text{-}\cat{Alg}_k\rightarrow\sets$ is represented by $$d:A\rightarrow\ph(A).$$
\end{lemma}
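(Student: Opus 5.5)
We prove representability by exhibiting a natural bijection
$$\hmm_{A\text{-}\cat{Alg}_k}(\ph(A),B)\;\simeq\;\der_k(A,B),\qquad f\mapsto f\circ d,$$
natural in objects $\phi:A\rightarrow B$ of $A\text{-}\cat{Alg}_k$. Here a derivation $\delta:A\rightarrow B$ is understood with $B$ an $A$-bimodule via $\phi$, so the Leibniz rule reads $\delta(ab)=\phi(a)\delta(b)+\delta(a)\phi(b)$.

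\textbf{The map is well defined.} Let $f:\ph(A)\rightarrow B$ be a morphism of $A$-algebras, so $f$ restricted to $A$ is $\phi$. Then $f\circ d$ is $k$-linear. Indeed, $d$ is $k$-linear on $\ph(A)$: we read $D$ as also containing the relations $d(\lambda a+\mu b)-\lambda da-\mu db$, which is implicit in calling $d$ a $k$-derivation. Applying $f$ to the relation $d(ab)=a\,db+da\,b$ gives the Leibniz rule for $f\circ d$. Naturality in $B$ is the computation already made above to show that $\der_k(A,-)$ is a functor: for a morphism $g:B\rightarrow C$ we have $g\circ(f\circ d)=(g\circ f)\circ d$.

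\textbf{Construction of the inverse.} Given $\delta\in\der_k(A,B)$, we use the universal property of the free algebra $A\langle dA\rangle$ over $A$ in the symbols $da$. Since $B$ is an $A$-algebra via $\phi$, there is a unique $k$-algebra map $\tilde f:A\langle dA\rangle\rightarrow B$ extending $\phi$ and sending $da\mapsto\delta(a)$. The Leibniz rule for $\delta$ gives $\tilde f(d(ab)-a\,db-da\,b)=0$, and $k$-linearity of $\delta$ kills the linearity relations. Hence $\tilde f$ vanishes on the two-sided ideal $D$ and descends to a unique $f_\delta:\ph(A)\rightarrow B$, which is a morphism under $A$ by construction.

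\textbf{Bijectivity.} We have $f_\delta\circ d=\delta$ by definition. Conversely, for a morphism $f:\ph(A)\rightarrow B$, the maps $f$ and $f_{f\circ d}$ agree on the generators $A\cup dA$ of $\ph(A)$, so they coincide. Thus $(\ph(A),d)$ is a universal derivation, that is, $d:A\rightarrow\ph(A)$ represents $\der_k(A,-)$, and by Yoneda it corresponds to $\id_{\ph(A)}$.

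\textbf{Main obstacle.} The main difficulty is foundational. We must specify precisely what "polynomial algebra $A\langle dA\rangle$" means: it is the free product $A\ast_k k\langle dA\rangle$, the coproduct in $k$-algebras of $A$ with the free algebra on the set $dA$. We must also make sure that the relations in $D$ include $k$-linearity of $d$. With these two points settled, the universal property of the coproduct supplies $\tilde f$, and the rest is routine.
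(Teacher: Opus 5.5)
Your proof is correct and follows the same route as the paper's: both define the comparison map on generators by $a\mapsto\rho(a)$, $da\mapsto\overline d(a)$, and both get uniqueness from the fact that $A\cup dA$ generates $\ph(A)$. Your version is more complete than the paper's sketch: you check that the map kills $D$ so it descends to the quotient, and you notice that $D$ must also contain the $k$-linearity relations for $d$ (the Leibniz relations alone do not force $d$ to be $k$-linear), a requirement the paper's definition of $\ph(A)$ leaves implicit.
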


\begin{proof} $\ph(A)$ comes with a derivation $d:A\rightarrow\ph(A).$ If $\rho:A\rightarrow B$ is any other  $A$-algebra over $k$ with a derivation $\overline d:A\rightarrow B,$ then there is a unique morphism $\phi:\ph(A)\rightarrow B$ such that the following diagram commute:
$$\xymatrix{A\ar[r]^d\ar[dr]_{\overline d}&\ph(A)\ar[d]^{\phi}\\&B,}$$ that is $\phi(a(db))=\rho(a)\overline d(b)$ and $\phi((da)b=\overline d(a)\rho(b).$
\end{proof}

\begin{lemma} Let $\phi:A\rightarrow B$ be a $k$-algebra homomorphism. Then there is a natural $k$-algebra homomorphism $\ph(\phi):\ph(A)\rightarrow\ph(B).$
\end{lemma}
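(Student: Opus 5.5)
The plan is to construct $\ph(\phi)$ from the representability result of the previous lemma, rather than by writing it down on generators and checking relations by hand. First we make $\ph(B)$ into an object of $A\text{-}\cat{Alg}_k$ via the composite
$$\iota_B\circ\phi:A\rightarrow B\rightarrow\ph(B),$$
where $\iota_B:B\rightarrow\ph(B)$ is the structure map, i.e. the inclusion of $B$ into $B\langle dB\rangle/D$. Next we form $\overline d=d_B\circ\phi:A\rightarrow\ph(B)$, with $d_B:B\rightarrow\ph(B)$ the universal derivation of $B$.

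We then check that $\overline d$ is a $k$-derivation of $A$ into this $A$-algebra. It is $k$-linear as a composite of $k$-linear maps, and for $a,a'\in A$
$$\overline d(aa')=d_B(\phi(a)\phi(a'))=\phi(a)\,d_B(\phi(a'))+d_B(\phi(a))\,\phi(a'),$$
which is exactly the Leibniz rule for the $A$-module structure induced by $\iota_B\circ\phi$. By the previous lemma, $\der_k(A,-)$ is represented by $d:A\rightarrow\ph(A)$, so there is a unique morphism of $A$-algebras $\ph(\phi):\ph(A)\rightarrow\ph(B)$ with $\ph(\phi)\circ d_A=d_B\circ\phi$ and $\ph(\phi)|_A=\iota_B\circ\phi$. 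Concretely, $\ph(\phi)(a)=\phi(a)$ and $\ph(\phi)(da)=d\phi(a)$.

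For naturality, uniqueness in the universal property gives $\ph(\id_A)=\id_{\ph(A)}$ and $\ph(\psi\circ\phi)=\ph(\psi)\circ\ph(\phi)$ for $\psi:B\rightarrow C$. In each case both sides restrict to the same map on $A$ and agree on $dA$, so they coincide. This makes $\ph$ a functor and $d$ a natural transformation from the identity to $\ph$.

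The main obstacle is that the proof of the previous lemma only sketches the existence and uniqueness of the factoring map. To be safe, I would verify it directly. First, define $\Phi:A\langle dA\rangle\rightarrow B\langle dB\rangle$ as the $k$-algebra map extending $\phi$ on $A$ and sending the free generator $da$ to $d\phi(a)$; this is legitimate because $A\langle dA\rangle$ is generated freely over $A$ by the symbols $da$. Second, check that each generator $d(aa')-a\,da'-(da)a'$ of $D$ maps to $d(\phi(a)\phi(a'))-\phi(a)\,d\phi(a')-(d\phi(a))\phi(a')$, which lies in $D_B$. Hence $\Phi(D)\subseteq D_B$ and $\Phi$ descends to the quotients. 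If the generators $da$ are meant to be $k$-linear in $a$, those relations are likewise sent to $D_B$ because $\phi$ is $k$-linear.
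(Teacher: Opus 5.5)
Your proposal is correct and follows the paper's own argument: make $\ph(B)$ an $A$-algebra via $\phi$, check that $d_B\circ\phi$ is a $k$-derivation, and invoke the representability of $\der_k(A,-)$ by $d:A\rightarrow\ph(A)$. Your version states the compatibility correctly as $\ph(\phi)\circ d_A=d_B\circ\phi$, where the paper writes $d_A\circ\psi=d_B$, and your generator check on $D$ and functoriality argument supply details the paper omits.
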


\begin{proof} The morphism $\phi:A\rightarrow B$ defines $B$ as an $A$-algebra over $k.$ Let $d_B:B\rightarrow\ph(B)$ be the derivation. Then $d_B\circ\phi(ab)=\phi(a)d_B(\phi(b))+d_B(\phi(a))\phi(b)$ so that $d_B\circ\phi:A\rightarrow B$ is a derivation. Because $\ph(A)$ is representing, there is a unique morphism $\psi:\ph(A)\rightarrow\ph(B)$ of $A$-algebras over $k$ such that $d_A\circ\psi=d_B.$
\end{proof}

We will generalize the previous to geometric associative $n$-varieties. First of all, a geometric associative variety is a scheme $(X,\mathcal O_X)$ of associative rings covered by open affine subsets isomorphic to (the affine scheme) $\pts_{\mathbb R}S$ where $S$ is an $\mathbb R[n]$-algebra, that is, there is an $\mathbb R[n]$-algebra homomorphism $f:\mathbb R[n]\rightarrow S.$

\begin{lemma}{(Gluing Lemma)} Let $\{X_{\alpha}\}_{\alpha\in I}$  be a family of geometric $n$-varieties parametrized by $I.$ Assume that for each pair $\alpha,\beta\in I$  there exists isomorphic open affine subvarieties $U_{\alpha}\subseteq X_{\alpha}, U_{\beta}\subseteq X_{\beta},$  $t_{\alpha\beta}:U_\alpha\overset\sim\rightarrow U_\beta.$ Then there exists a variety $X$ such that $\cup_{\alpha}X_\alpha=X$ is an open covering such that $(X_\alpha,\mathcal O_X|_{X_\alpha})\simeq (X_\alpha,\mathcal O_{X_\alpha}).$
\end{lemma}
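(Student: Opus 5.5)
The plan is to follow the standard gluing construction for schemes (Hartshorne, Exercise II.2.12), adapted to ringed spaces of associative rings. The statement as written needs the usual cocycle conditions, and I will impose them explicitly. For each pair $\alpha,\beta$ we have open $U_{\alpha\beta}\subseteq X_\alpha$ and $U_{\beta\alpha}\subseteq X_\beta$ with an isomorphism of ringed spaces $t_{\alpha\beta}:U_{\alpha\beta}\overset\sim\rightarrow U_{\beta\alpha}$. I assume:
\begin{itemize}
\item[(a)] $U_{\alpha\alpha}=X_\alpha$ and $t_{\alpha\alpha}=\id$;
\item[(b)] $t_{\alpha\beta}(U_{\alpha\beta}\cap U_{\alpha\gamma})=U_{\beta\alpha}\cap U_{\beta\gamma}$;
\item[(c)] $t_{\alpha\gamma}=t_{\beta\gamma}\circ t_{\alpha\beta}$ on $U_{\alpha\beta}\cap U_{\alpha\gamma}$.
\end{itemize}
Without these conditions the identification is not an equivalence relation, and no space $X$ could contain each $X_\alpha$ as an open subspace.

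\textbf{Step 1: the topological space.} Form the disjoint union $\coprod_\alpha X_\alpha$. Declare $x\in X_\alpha$ equivalent to $y\in X_\beta$ when $x\in U_{\alpha\beta}$ and $t_{\alpha\beta}(x)=y$. Conditions (a), (b), (c) give reflexivity, symmetry (take $\gamma=\alpha$ in (c)) and transitivity. Let $X$ be the quotient with the quotient topology, and $\iota_\alpha:X_\alpha\rightarrow X$ the induced maps. I then check that each $\iota_\alpha$ is an open embedding. For open $W\subseteq X_\alpha$, the set $\iota_\beta^{-1}(\iota_\alpha(W))$ equals $t_{\alpha\beta}(W\cap U_{\alpha\beta})$. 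This is open in $X_\beta$ because $t_{\alpha\beta}$ is a homeomorphism onto the open set $U_{\beta\alpha}$. Hence $X=\cup_\alpha\iota_\alpha(X_\alpha)$ is an open covering.

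\textbf{Step 2: the sheaf.} For open $V\subseteq X$, set $\mathcal O_X(V)$ to be the set of families $(s_\alpha)_\alpha$ with $s_\alpha\in\mathcal O_{X_\alpha}(\iota_\alpha^{-1}V)$, compatible in the sense that $t_{\alpha\beta}^\#$ carries $s_\beta|_{\iota_\beta^{-1}V\cap U_{\beta\alpha}}$ to $s_\alpha|_{\iota_\alpha^{-1}V\cap U_{\alpha\beta}}$. This is a subring of a product of associative rings. The sheaf axioms follow componentwise from those of the $\mathcal O_{X_\alpha}$, using the universal property of products as in the definition of the structure sheaves earlier in the paper.

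\textbf{Step 3: restriction to the pieces.} I then show $(X_\alpha,\mathcal O_X|_{X_\alpha})\simeq(X_\alpha,\mathcal O_{X_\alpha})$. The map sends a family to its $\alpha$-component. Its inverse sends $s_\alpha$ to the family $s_\beta:=(t_{\alpha\beta}^{-1})^\#(s_\alpha|_{U_{\alpha\beta}})$, where $t_{\alpha\beta}^\#$ is the ring isomorphism on sections. These $s_\beta$ are compatible precisely by the cocycle condition (c). Because each $X_\alpha$ is covered by open affines $\pts_{\mathbb R}S$ with $S$ an $\mathbb R[n]$-algebra, $X$ is covered by such affines as well, so $X$ is a geometric $n$-variety.

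I expect Step 3 to be the main obstacle. It is the only place where compatibility of sections across triple overlaps is genuinely used, and with noncommutative rings one must check that the $t_{\alpha\beta}^\#$ are honest ring isomorphisms of the sheaves $\mathcal O$ built from the local representing objects $A_p$. My first attempt will be to verify this on basic opens $D(f)$ and extend to arbitrary opens using the sheaf property.
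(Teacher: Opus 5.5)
Your proposal is correct and is essentially the paper's own construction: identify $x\in U_{\alpha\beta}$ with $t_{\alpha\beta}(x)$ in $\coprod_\alpha X_\alpha$, take the quotient, and give it the inherited sheaf of rings. The difference is that you state the cocycle conditions (a)--(c), which the paper's proof tacitly needs for its relation to be an equivalence relation and for the $X_\alpha$ to embed compatibly with the $t_{\alpha\beta}$, and you actually carry out the open-embedding, sheaf and restriction checks that the paper compresses into one sentence. Two minor points: without (a)--(c) the conclusion as literally stated would still be satisfied by the disjoint union, so what fails is only a gluing compatible with the $t_{\alpha\beta}$; and the concern in your last paragraph is moot, because the $t_{\alpha\beta}$ are isomorphisms of ringed spaces by hypothesis, so the $t_{\alpha\beta}^\#$ are already ring isomorphisms.
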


\begin{proof} Define an equivalence relation $\sim$ on $\coprod_{\alpha\in I}X_\alpha$ by $x\sim y$ if there exists $\alpha,\beta\in I$ such that there exists isomorphic open affine subvarieties $x\in U_{\alpha}\subseteq X_{\alpha},y\in U_{\beta}\subseteq X_{\beta}$ and an isomorphism $t_{\alpha\beta}:U_\alpha\rightarrow U_\beta$ such that $t_{\alpha\beta}(x)=y.$ Then the topological space $X=\coprod_{\alpha\in I}X_\alpha/\sim$ with the inherited sheaf of rings is called the gluing of $X_\alpha, \alpha\in U.$
\end{proof}

\begin{definition} Let  $M$ be a categorical differentiable $n$-manifold. For each open chart $U\subseteq M$ put $A(U)=C^\infty(\mathbb R^n)$ and then $O_T(U)=\ph(A(U)).$ Then let $TM$ be the gluing of $(\pts_\mathbb R(\ph(A(U)))$ which comes with a projection $\pi:(TM,\mathcal O_{TM})\rightarrow (M,\mathcal O_M).$  $TM$ is called the tangent manifold of $M.$
\end{definition}

\begin{definition} Let $X$ be an associative $n$-variety. For each open affine $U\subseteq X$ put $O_T(U)=\ph(\mathcal O_X(U)).$ Then let $TX$ be the glueing of the affine schemes $\pts_{\mathbb R} \ph(\mathcal O_X(U)).$ Then $TX$ is an associative variety which comes with a projection $TX\rightarrow X.$ $TX$ is called the associative tangent variety of $X.$
\end{definition}

Recall that our mission is to define a geometry on associative varieties. It is reasonable to believe, from the constructions above, that the category of categorical differentiable manifolds is equivalent to the category of (ordinary) differentiable manifolds, and that the tangent bundle on a categorical manifold corresponds to the ordinary tangent bundle. However, we will not use space and time on this here.

\section{Riemannian Geometry}

Let $X$ be an associative geometric variety.  Now $X$ comes with its tangent space $TX\rightarrow X$ and we use the notation $$T^nX=\times_{X}^nTX$$ for the $n$-tensor (this is $TX\times_{X}\cdots\times_{X}TX, n$ times, the $n$-fold fibred product.)

\begin{definition} Let $\psi:E\rightarrow X$ be a vector bundle on a variety $X.$ A vector field on $X$ in $E$ is a section $s:X\rightarrow E,$ that is $\psi\circ s=\id:X\rightarrow X.$ The set of vector fields on $X,$ in $E,$ is denoted $\mathcal E_X.$
\end{definition}

In differential geometry,  tensors are defined as multilinear maps between vector spaces.
Translated to geometric algebras, a vector bundle is a homomorphism $A\rightarrow E,$ so that a $2$-tensor is a homomorphism $$\psi:A\rightarrow\mor_{\cat{Alg}_{\mathbb R[n]}}(E\otimes_A E,\mathbb R[t]).$$ A $2$-tensor field is then commutative diagram $$\xymatrix{A\ar[r]\ar[dr]&E\otimes_A E\\&A\otimes_{\mathbb R}\mathbb R[t].\ar[u]_p}$$ This says that for each point $x:A\rightarrow\mathbb R$ we get,  by tensoring with with $\mathbb R$ over $A,$ the diagram $$\xymatrix{\mathbb R\ar[r]\ar[dr]&\mathbb R\otimes_A(E\otimes_A E)\\&\mathbb R[t]\ar[u]_{p_x}}$$ which induces an $\mathbb R$- bilinear continuous map $$p_x:\mathbb R^k\times\mathbb R^k=\pts(\mathbb R\otimes_A(E\otimes_A E))\rightarrow\pts(\mathbb R[t])=\mathbb R.$$

\begin{definition} A Riemannian metric on an associative variety $X,$ is a $2$-tensor field $g\in\mathcal T^2_X$ such that for each $p\in X,$ $g_p$ is an inner product on $T_pX.$
\end{definition}

\begin{proposition} Every associative $n$-geometric variety over $\mathbb R$ has a Riemannian geometry.
\end{proposition}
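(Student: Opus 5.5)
The plan is to copy the classical existence proof for Riemannian metrics on smooth manifolds: build a metric on each affine chart, then glue the local metrics with a partition of unity. The framework above supplies the pieces. An associative geometric $n$-variety $X$ is covered by open affines $U_\alpha\simeq\pts_{\mathbb R}S_\alpha$, where each $S_\alpha$ comes with a structure map $f_\alpha:\mathbb R[n]\rightarrow S_\alpha$. The tangent variety $TX$ is obtained by gluing the $\pts_{\mathbb R}\ph(\mathcal O_X(U_\alpha))$. A $2$-tensor field is a diagram $A\rightarrow E\otimes_A E\leftarrow A\otimes_{\mathbb R}\mathbb R[t]$, which at each point $x$ gives an $\mathbb R$-bilinear map $p_x:\mathbb R^k\times\mathbb R^k\rightarrow\mathbb R$.

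\emph{Local step.} On a chart $U_\alpha$, I would pull back the standard Euclidean metric along $f_\alpha$. On $\mathbb R[n]$ the phase space $\ph(\mathbb R[n])$ is generated over $\mathbb R[n]$ by $dx_1,\dots,dx_n$. The Euclidean form $\sum_i dx_i\otimes dx_i$ therefore defines a homomorphism $\mathbb R[t]\rightarrow\ph(\mathbb R[n])\otimes\ph(\mathbb R[n])$ with $t\mapsto\sum_i dx_i\otimes dx_i$, and hence a $2$-tensor field in the sense of the diagram above. The map $\ph(f_\alpha):\ph(\mathbb R[n])\rightarrow\ph(S_\alpha)$ from the lemma on functoriality of $\ph$ transports this to a $2$-tensor field $g_\alpha$ on $U_\alpha$, with $t\mapsto\sum_i d(f_\alpha x_i)\otimes d(f_\alpha x_i)$. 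At an $\mathbb R$-point $x$, tensoring with $\mathbb R$ over $S_\alpha$ turns $g_\alpha$ into the bilinear form $p_x(v,w)=\sum_i v_iw_i$ in the coordinates $d x_i$. This form is symmetric and positive definite on the span of the $dx_i$, so it is an inner product.

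\emph{Gluing step.} I would use the identification of algebraic smooth $n$-varieties with categorical smooth manifolds (the two definitions are identical) to replace $\mathbb R[n]$ by $C^\infty(\mathbb R^n)$. In $C^\infty(\mathbb R^n)$ the basic opens $D(f)$, with $f$ a bump function, let us choose a smooth partition of unity $\{\lambda_\alpha\}$ subordinate to the cover, where each $\lambda_\alpha$ lies in the image of $\mathbb R[n]\rightarrow C^\infty(\mathbb R^n)\rightarrow\mathcal O_X(U_\alpha)$ extended by zero. Set $g=\sum_\alpha\lambda_\alpha g_\alpha$; this is a locally finite sum because the partition of unity is locally finite. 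The gluing lemma and the sheaf property of $\mathcal O_{TX}$, which follows from the universal property of products, guarantee that $g$ is a global $2$-tensor field. Positive definiteness is preserved pointwise, since a convex combination of inner products is again an inner product.

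\emph{Main obstacle.} The hard step is at the points. At a point $x$ one must check that the fibre $\pts(\mathbb R\otimes_A\ph(A))$ is really the $n$-dimensional space spanned by the $dx_i$, i.e.\ that $T_xX$ is $\mathbb R^n$, even though $S_\alpha$ is noncommutative. If the fibre is larger, the pullback $g_\alpha$ is only positive semidefinite on it. My first attempt would be to use that an $\mathbb R$-point kills all commutators, so $\mathbb R\otimes_{S_\alpha}\ph(S_\alpha)$ is the commutative cotangent space $\mathfrak m_x/\mathfrak m_x^2$. I would then add to $g_\alpha$, as a correction term, a positive form on the complementary generators $d s$ for $s$ in a chosen generating set of $S_\alpha$. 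This makes each $g_\alpha$ positive definite on the whole fibre before the gluing step.
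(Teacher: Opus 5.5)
Your local step, once the correction term is added, is essentially the paper's proof, which handles only the affine case. The paper takes a full generating set $x_1,\dots,x_m$ of $A$ (with $m\geq n$) and sets $g(t)=\sum_{i=1}^m dx_i\,dy_i$ from the start. This is the Euclidean metric of $\mathbb R^m$ pulled back along $p\mapsto(p(x_1),\dots,p(x_m))$.

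Positive definiteness does not come from $\mathbb R\otimes_{S_\alpha}\ph(S_\alpha)$ being $\mathfrak m_x/\mathfrak m_x^2$; literally it is not. It comes from the fact that an $\mathbb R$-point of $\ph(A)$ over $x$ is an $x$-derivation $v:A\rightarrow\mathbb R$. Such a $v$ vanishes on $\mathfrak m_x^2$, hence on commutators, since $[a,b]=[a-x(a),b-x(b)]$. By the Leibniz rule it is determined by $v(x_1),\dots,v(x_m)$, so $\sum_i v(x_i)w(x_i)$ is an inner product. Pulling back along $\mathbb R[n]\rightarrow S_\alpha$ alone is degenerate in general, so your correction is needed. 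Starting from all generators, as the paper does, makes it automatic.

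The genuine gap is the gluing step: the partition of unity you invoke does not exist in this setting.

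\begin{itemize}
\item The paper's identification of algebraic smooth $n$-varieties with categorical smooth manifolds concerns spaces locally isomorphic to $\mathbb R^n$ with the polynomial sheaf. Your charts are $\pts_{\mathbb R}S_\alpha$ for an arbitrary associative $\mathbb R[n]$-algebra $S_\alpha$, so that identification does not apply.
\item There is no ring map $C^\infty(\mathbb R^n)\rightarrow\mathcal O_X(U_\alpha)$, and no nonzero polynomial is a bump function.
\item The structure sheaf itself (generated by $\im\rho(U)$ and inverses of units) contains no bump functions either, because the topology is Zariski. On an irreducible chart every nonempty open set is dense, so the nonvanishing locus of a nonzero section never has closure inside a proper $U_\alpha$.
\end{itemize}

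Concretely, take $X=\pts_{\mathbb R}\mathbb R[x]=\mathbb R$, which carries the cofinite topology, covered by $D(x)$ and $D(x-1)$. A global section $\lambda_1$ is a rational function without real poles. If it vanishes on a neighbourhood of $0$, it vanishes on a cofinite set, hence $\lambda_1=0$. Likewise $\lambda_2=0$, so $\lambda_1+\lambda_2=1$ is impossible. Hence $g=\sum_\alpha\lambda_\alpha g_\alpha$ is not defined.

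The paper deliberately avoids this route. It proves the affine case, asserts that the affine metrics glue, and remarks that the partition-of-unity step of the smooth proof is not needed in the Zariski topology. The paper gives no compatibility check on overlaps either. So for a rigorous non-affine statement you need an algebraic mechanism, for instance local forms that agree on overlaps so that the sheaf property applies, not smooth bump functions.
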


\begin{proof} It suffices to consider the affine situation. Let $A$ be an associative, $n$-geometric algebra for some $n.$ Then its tangent space is $\ph(A),$ and a $2$-tensor field $g\in\mathcal T^2_X$ corresponds to a homomorphism $$g:A\otimes_{\mathbb R}\mathbb R[t]\rightarrow\ph(A)\otimes_A\ph(A).$$ Because $A$ is finitely generated, we have $A\simeq(\mathbb R[x_1,\dots,x_n]\otimes_\mathbb R\mathbb R\langle x_{n+1},\dots,x_m\rangle)/J$ for some ideal $J,$ and so $$\ph(A)\simeq\mathbb R\langle x_1,\dots,x_m,dx_1,\dots,dx_m\rangle/(dJ)$$ for some $m\geq n.$ Then $$\ph(A)\otimes_A\ph(A)=\mathbb R\langle x_1,\dots,x_m,dx_1,\dots,dx_m,dy_1,\dots,dy_m\rangle/(J,dJ\otimes_A dJ).$$ Every $dx_i$ is actually $\frac{d}{dx_i}|_{(x_1,\dots,x_m)},$
so we define $g:\mathbb R[t]\rightarrow\ph(A)\otimes_A\ph(A)$ by $$g(t)=\sum_{i=1}^m (dx_i)(dy_i).$$ This induces the usual Euclidean metric on $\mathbb R^m$ which glues to a Riemannian metric on $X.$  
\end{proof}

In the case of a smooth manifold $M$, with the Euclidean topology, we need the existence of a partition of unity for the proof of the existence of a Riemannian geometry on $M.$ This is avoided in the algebraic case because of the Zariski topology which gives the definition of a not necessarily Hausdorff topology so that partition of unity follows from the irreducibility of a variety.

\section{Epilogue}
Let $U$ be an associative variety such that each $\mathbb R$-point is an equivalence class of (observer,observed), $(o,p),$ under the action of the linear group $\gl_6.$ We can obtain $U$ by blowing up $\mathbb R^6$ noncommutatively in the diagonal $\Delta\subseteq\mathbb R^3\times\mathbb R^3,$ which is adding a noncommutative tangent vector $\vec c.$ Choose a metric on $U,$ and $\vec c$ defines the velocity $v$ from $P=(o_1,p_1)$ to $Q=(o_2,p_2)$ and then time is $t=v/d(P,Q).$ In Laudal's book \cite{Laudal21}, many physical laws is proved to hold.

\end{document}